\documentclass[12pt,leqno,draft]{article}
\usepackage{amsfonts}
%\linespread{1.6}
\pagestyle{plain}
\usepackage{amsmath, amsthm, amsfonts, amssymb, color}
\usepackage{mathrsfs}
\usepackage{color}
\usepackage{stmaryrd}
\setlength{\topmargin}{0cm} \setlength{\oddsidemargin}{0cm}
\setlength{\evensidemargin}{0cm} \setlength{\textwidth}{16.5truecm}
\setlength{\textheight}{22truecm}
\makeatletter
\newcommand{\Spvek}[2][r]{%
  \gdef\@VORNE{1}
  \left(\hskip-\arraycolsep%
    \begin{array}{#1}\vekSp@lten{#2}\end{array}%
  \hskip-\arraycolsep\right)}

\def\vekSp@lten#1{\xvekSp@lten#1;vekL@stLine;}
\def\vekL@stLine{vekL@stLine}
\def\xvekSp@lten#1;{\def\temp{#1}%
  \ifx\temp\vekL@stLine
  \else
    \ifnum\@VORNE=1\gdef\@VORNE{0}
    \else\@arraycr\fi%
    #1%
    \expandafter\xvekSp@lten
  \fi}
\makeatother

\newtheorem{thm}{Theorem}[section]
\newtheorem{cor}[thm]{Corollary}

\newtheorem{rem}[thm]{Remark}
\theoremstyle{definition}

\newcommand{\scr}[1]{\mathscr #1}
\definecolor{wco}{rgb}{0.5,0.2,0.3}

\numberwithin{equation}{section} \theoremstyle{remark}

\newcommand{\ua}{\uparrow}

\title{{\bf    Harnack and Shift Harnack Inequalities for SDEs with Integrable Drifts}
%\footnote{Supported in
% part by  NNSFC(11431014)}
}
\author{
{\bf     Xing Huang  }\\
\footnotesize{  Center for Applied Mathematics, Tianjin University, Tianjin 300072, China}\\
\footnotesize{  xinghuang@tju.edu.cn}}
\begin{document}
\allowdisplaybreaks
\def\R{\mathbb R}  \def\ff{\frac} \def\ss{\sqrt} \def\B{\mathbf
B}
\def\N{\mathbb N} \def\kk{\kappa} \def\m{{\bf m}}
\def\ee{\varepsilon}\def\ddd{D^*}
\def\dd{\delta} \def\DD{\Delta} \def\vv{\varepsilon} \def\rr{\rho}
\def\<{\langle} \def\>{\rangle} \def\GG{\Gamma} \def\gg{\gamma}
  \def\nn{\nabla} \def\pp{\partial} \def\E{\mathbb E}
\def\d{\text{\rm{d}}} \def\bb{\beta} \def\aa{\alpha} \def\D{\scr D}
  \def\si{\sigma} \def\ess{\text{\rm{ess}}}
\def\beg{\begin} \def\beq{\begin{equation}}  \def\F{\scr F}
\def\Ric{\text{\rm{Ric}}} \def\Hess{\text{\rm{Hess}}}
\def\e{\text{\rm{e}}} \def\ua{\underline a} \def\OO{\Omega}  \def\oo{\omega}
 \def\tt{\tilde} \def\Ric{\text{\rm{Ric}}}
\def\cut{\text{\rm{cut}}} \def\P{\mathbb P} \def\ifn{I_n(f^{\bigotimes n})}
\def\C{\scr C}   \def\G{\scr G}   \def\aaa{\mathbf{r}}     \def\r{r}
\def\gap{\text{\rm{gap}}} \def\prr{\pi_{{\bf m},\varrho}}  \def\r{\mathbf r}
\def\Z{\mathbb Z} \def\vrr{\varrho} \def\ll{\lambda}
\def\L{\scr L}\def\Tt{\tt} \def\TT{\tt}\def\II{\mathbb I}
\def\i{{\rm in}}\def\Sect{{\rm Sect}}  \def\H{\mathbb H}
\def\M{\scr M}\def\Q{\mathbb Q} \def\texto{\text{o}} \def\LL{\Lambda}
\def\Rank{{\rm Rank}} \def\B{\scr B} \def\i{{\rm i}} \def\HR{\hat{\R}^d}
\def\to{\rightarrow}\def\l{\ell}\def\iint{\int}
\def\EE{\scr E}\def\no{\nonumber}
\def\A{\scr A}\def\V{\mathbb V}\def\osc{{\rm osc}}
\def\BB{\scr B}\def\Ent{{\rm Ent}}\def\3{\triangle}\def\H{\scr H}
\def\U{\scr U}\def\8{\infty}\def\1{\lesssim}\def\HH{\mathrm{H}}
 \def\T{\scr T}
\maketitle

\begin{abstract} In this paper, the coupling by change of measure is constructed for a class of SDEs with integrable drift and additive noise, from which the Harnack and shift Harnack inequalities are derived. Finally, as applications, the gradient estimate, the regularity of the heat kernel and the distribution properties of the associated transition probability are also obtained. The important tool is Krylov's estimate.
\end{abstract} \noindent
 AMS subject Classification:\  60H10, 60H15.   \\
\noindent
 Keywords: Integrable drift, Harnack inequaity, Shift Harnack inequalities, Krylov's estimate
 \vskip 2cm

\section{Introduction}
 Let $E$ be a topological space, $P$ is a Markov operator on $\B_b(E)$ (the bounded measurable functions on $E$), the dimension-free Harnack inequality with power $p$, i.e.
\begin{equation}\label{Har}(P f)^p(x)\leq P f^p(y) \e ^{\Psi(x,y)}, \ \ x, y \in E, f\in \B^+_b(E)\end{equation}
 has many applications, for instance, it implies a dimension-free lower bound for logarithmic Sobolev constant on compact manifolds \cite{FYW0}. It also yields strong Feller property, gradient estimate, uniqueness of invariant probability, regularity of the heat kernel with respect to invariant probability, see \cite[Chapter 1]{Wbook}. Moreover, it is an important tool in the proof of hypercontractivity of non-symmetric semigroup, \cite{BWY15,W2}. On the other hand, when $E$ is a Banach space, the shift Harnack inequality
 \begin{equation}\label{shHar}\Phi(P f(x))\leq P\{ \Phi\circ f(y+\cdot)\} \e ^{C_{\Phi}(x,y)}, \ \ x, y \in E, f\in \B^+_b(E)\end{equation}
 implies the existence and regularity of density of $P$ with respect to the Lebesgue measure. Thus, the Harnack and shift Harnack inequalities attracts much attention and there are many results on this topic, of which \cite{Wbook} gives lots of models satisfying Harnack and shift Harnack inequalities. For simplicity, consider the SDE on $\mathbb{R}^d$ below:
 \beq\label{EH0}
\d X_t=b_t(X_t)\d t+\d W_t.
\end{equation}
The classical condition for Harnack and shift Harnack inequalities is
\begin{equation}\label{bh}\langle b_t(x)-b_t(y), x-y\rangle\leq C|x-y|^2, \ \ t\geq 0, x,y \in\mathbb{R}^d\end{equation}
for a constant $C>0$.
 Recently, Zvonkin type transforms have been used to prove existence and uniqueness of SDEs and SPDEs with singular drift, see e.g. \cite{B,GM,H,HW,HW2,HW3,P,FYW,WZ,WZ15,Z,Z2,ZV}. Following \cite{Z2}, Shao \cite{Shao} proved the Harnack inequality \eqref{Har} under the condition $|b|+|b|^2\in L^q_p(T)$ for some $p,q>1$ satisfying $\frac{d}{p}+\frac{2}{q}<1$, here $L^q_p(T)$ is defined in \eqref{Lpq}. However, the Harnack inequality in \cite{Shao} is not precise since $\lim_{y\to x}\e^{\Psi(x,y)}>1$. In addition, \cite{LLW} has obtained the precise log-Harnack inequality by gradient-gradient estimate
 $$|\nabla P_t f|^2\leq C P_t |\nabla f|^2$$
by approximation method when $|b|\in L^q_p(T)$ for some $p,q>1$ satisfying $\frac{d}{p}+\frac{2}{q}<1$, which removes the condition $|b|^2\in L^q_p(T)$ in \cite{Shao}. Unfortunately, \cite{LLW} can not obtain gradient-gradient estimate
\begin{align}\label{gra}
|\nabla P_t f|\leq C P_t |\nabla f|,\end{align}
which implies the precise Harnack inequality \eqref{Har} by \cite[Theorem 1.3.6 (2)]{Wbook}. To obtain precise Harnack inequality \eqref{Har} in the sense that $\lim_{y\to x}\e^{\Psi(x,y)}=1$, instead of proving \eqref{gra}, we adopt the method of coupling by change of measure. To this end, we introduce an additional condition \eqref{b-co} below, which means $b$ satisfying $$\sup_{y\neq 0}\frac{\|b(\cdot+y)-b\|_{p}}{|y|}<\infty$$ for some $p>d$ when $q=\infty$, where $\|\cdot\|_p$ is the $L^p$ norm respect to Lebesgue measure, see Remark \ref{example} for example and more details.

Compared with the existed precise Harnack inequalities, the drift in this paper is allowed to be integrable and not continuous. As to the shift Harnack inequality, it is very new since there is few result for SDE with integrable drift on this topic.

Throughout the paper, the letter $C$ or $c$ will denote a positive constant, and $C(\theta)$ or $c(\theta)$ stands for a constant depending on $\theta$. The value of the constants may change from one appearance to another.

For a measurable function $f$ defined on $[0,T]\times\mathbb{R}^d$, let
\begin{equation}\label{Lpq}\|f\|_{L^q_p(s,t)}=\left(\int_s^t\left(\int_{\mathbb{R}^d}|f_r(x)|^p\d x\right)^{\frac{q}{p}}\d r\right)^{\frac{1}{q}}, \ \ p,q\ge 1, 0\le s\le t\le T. \end{equation} When $s=0$, we simply denote   $\|f\|_{L^q_p(0,t)}=\|f\|_{L^q_p(t)}$.
Let $W_t$ be an $m$-dimensional Brownian motion on a complete
filtration probability space $(\OO, \{\F_{t}\}_{t\ge 0},\P)$.
Consider the following SDEs on $\mathbb{R}^{d}$:
\beq\label{EH}
\d X_t=b_t(X_t)\d t+\sigma_t\d W_t,
\end{equation}
where
$$b: [0,\infty)\times \R^d\to \R^d;\ \ \si: [0,\infty)\times \R^d\to  \R^d\otimes\R^m$$
are   measurable.
Throughout this paper, we make the following assumptions:
\beg{enumerate} \item[{\bf (H1)}] There exists constants $p,q> 1$ with $\frac{d}{p}+\frac{2}{q}<1$ such that
 \begin{align}\label{b-in}
\|b\|_{L_p^q(T)}<\infty,\ \ T\geq0.
\end{align}
Moreover, there exists an nonnegative function $K\in L^{q}_{loc}([0,\infty))$ such that
 \begin{align}\label{b-co}
\left(\int_{\mathbb{R}^d}|b_t(x+y)-b_t(x)|^{p}\d x\right)^{\frac{1}{p}}\leq K(t)|y|,\ \ t\geq0,y\in\mathbb{R}^d.
\end{align}
\item[{\bf (H2)}] There exists a constant $\delta\in(1,\infty)$ such that for any $t\in[0,\infty)$,
$$\delta^{-1} I_{d\times d}\leq\sigma_t\sigma_t^\ast\leq\delta I_{d\times d}.$$
%\item[{\bf (H3)}] For any $T\geq 0$,
%\begin{align}\label{sigma}
%\|\nabla\sigma\|_{L^q_p(T)}<\infty,
%\end{align}
%where $\nabla $ is the weak gradient.
\end{enumerate}
According to \cite[Theorem 1.1]{Z2}, under \eqref{b-in} and {\bf(H2)}, the equation \eqref{EH} has a unique non-explosive strong solution $X_t^x$ with $X_0=x\in\mathbb{R}^d$. Let $P_t$ be the associated Markov semigroup, i.e.
$$P_tf(x)=\mathbb{E}f(X_t^x), \ \ f\in\B_b(\mathbb{R}^d).$$
\begin{rem}\label{example} To obtain precise Harnack inequality, we introduce \eqref{b-co} instead of the Lipschitzian continuity for $b$, i.e.
\begin{align}\label{L-c}\|b_t(\cdot+y)-b_t(\cdot)\|_{\infty}<C(t)|y|.
\end{align} To see the difference between \eqref{b-co} and \eqref{L-c}, we give an example as follows. Let $b=a1_{[c_1,c_2]}$ for $a, c_1,c_2\in\mathbb{R}^d$ with $a\neq0$ and $c_1\leq c_2$. Obviously, $b$ does not satisfy \eqref{L-c} (in fact, $b$ does not satisfy \eqref{bh} either), but by a simple calculus, \eqref{b-co} holds.  From this example, we see that $b$ may be not continuous if \eqref{b-co} holds. On the other hand, it is well known that
$$\|f(\cdot+y)-f\|_{p}\leq \|\nabla f\|_{p}|y|, \ \ p>0, $$
where $\nabla $ is the weak gradient. This means that if $\|\nabla b_t\|_{p}<K(t)$ for some $K\in L^{q}_{loc}([0,\infty))$, then \eqref{b-co} holds. %Since $p>d$, by Sobolev embedding theorem, for any $\alpha<1-d/p$,
%$$\sup_{x\neq y}\frac{|b(x)-b(y)|}{|x-y|^\alpha}\leq C(\|\nabla b\|_{L^p}+\|b\|_{L^p}).$$

\end{rem}
Let  $$\scr K:=\Big\{(p,q)\in (1,\infty)\times(1,\infty):\   \ff d p +\ff 2 q<2\Big\}.$$
We firstly give an important lemma which will be used in the sequence.
\beg{lem}\label{KK} Let $T>0$. Assume \eqref{b-in} and {\bf(H2)}.
 Then for any $(\aa,\bb)\in \scr K$, there exists a constant  $\kappa=\kappa(T,\delta, \aa,\bb, \|b\|_{L_p^q(T)})>0$ such that for any $s_0\in [0,T)$ and any solution $(X_{s_0,t})_{t\in [s_0,T]}$ of $\eqref{EH}$ from time $s_0$,
\beq\label{APP'}\E\bigg[\int_s^t |f|(r, X_{s_0,r}) \d r\Big| \F_s\bigg]\le   \kappa\|f\|_{L_{\aa}^{\bb}(T)},\ s_0\leq s<t\leq T, f\in L_{\aa}^{\bb}(T).\end{equation}
Then for any $\lambda>0$, there exists a constant $\gamma=\gamma(\lambda, \kappa,\|f\|_{L_{\aa}^{\bb}(T)})>0$ such that
\begin{align}\label{Kh}\E\big(\e^{\ll\int_s^T |f|(r,X_{s_0,r}) \d r}\big| \F_s\big)\leq\gamma, \ \ s_0\leq s\leq T.\end{align}
Moreover,
\beq\label{KR2}   \E\big(\e^{\ll\int_s^T |f|(r,X_{s_0,r}) \d r}\big| \F_s\big) \le \frac{1}{1-\lambda \kappa\|f\|_{L_{\aa}^{\bb}(T)}},\ \ s_0\leq s\leq T\end{equation}
when $\|f\|_{L_{\aa}^{\bb}(T)}<\frac{1}{\lambda\kappa}$.
 \end{lem}
 \begin{proof} \eqref{APP'}, which is called Krylov's estimate, was proved in \cite[Lemma 3.3]{KR}, see also \cite[Lemma 3.1]{HW3} for the multiplicative noise case. \eqref{Kh} follows from \eqref{APP'} and Khasminskii's estimate. We only need to prove \eqref{KR2}. Since \eqref{APP'} implies that for any $n\geq 1$, $\lambda>0$,
  \beq\label{APP''}\lambda^n\E\bigg[\left(\int_s^T |f|(r, X_{s_0,r}) \d r\right)^n\Big| \F_s\bigg]\le   n!  \left(\lambda \kappa\|f\|_{L_{\aa}^{\bb}(T)}\right)^n,\ \ s_0\leq s\leq T, f\in L_{\aa}^{\bb}(T).\end{equation}
 Thus, if $\|f\|_{L_{\aa}^{\bb}(T)}<\frac{1}{\lambda \kappa}$, we have
 \beq\label{KR2'}   \E\big(\e^{\ll\int_s^T|f|(r,X_{s_0,r}) \d r}\big| \F_s\big) \le \sum^\infty_{n=0}\left(\lambda \kappa\|f\|_{L_{\aa}^{\bb}(T)}\right)^n=\frac{1}{1-\lambda\kappa\|f\|_{L_{\aa}^{\bb}(T)}},\ \ s\in[s_0,T]. \end{equation}
% Taking $s=t_0\leq t_1<\cdots<t_k=T$ with $t_i-t_{i-1}=\frac{T}{k}\leq (2\lambda\|f\|_{L_{\aa}^{\bb}(T)})^{-\frac{1}{\gamma}}$, we have
% \beq\label{KR2''}\begin{split}   \E\big(\e^{\ll\int_s^T f(r,X_{s,r}) \d r}\big| \F_s\big) &=\E\left(\prod_{i=0}^{k-1}\e^{\ll\int_{t_i}^{t_{i+1}} f(r,X_{s,r}) \d r}\Big| \F_s\right)\\ &=\E\left(\E\left(\e^{\ll\int_{t_{k-1}}^{t_{k}} f(r,X_{s,r}) \d r}\Big| \F_{t_{k-1}}\right)\prod_{i=0}^{k-2}\e^{\ll\int_{t_i}^{t_{i+1}} f(r,X_{s,r}) \d r}\Big| \F_s\right)\\
% &\leq C\E\left(\prod_{i=0}^{k-2}\e^{\ll\int_{t_i}^{t_{i+1}} f(r,X_{s,r}) \d r}\Big| \F_s\right)\\
% &\leq C^k ,\ \ s\in[0,T]. \end{split}\end{equation}
 \end{proof}
 The paper is organized as follows: In Section 2, we give main results on Harnack and shift Harnack inequality and their applications respectively; In Section 3, we prove Harnack inequality; In Section 4, we prove shift Harnack inequality.
\section{Main Results}
\subsection{Harnack Inequality and Its Applications}
\begin{thm}\label{T-Har}  Assume
  {\bf (H1)}-{\bf (H2)}. Let $T>0$ and $\beta(T,K,\delta,\kappa)=\delta\left(\frac{1}{T}+\kappa\|K\|^2_{L^{q}([0,T])}\right)$.
%  Then for any $x,y\in \R^{d}$ and positive
%$f\in \B_b(\R^{d})$, %the log-Harnack inequality holds, i.e.
%\beg{equation*}\beg{split}
%&P_T\log f(y)\leq\log P_T f(x)+ \beta(T,K,\delta)|x-y|^2
%,\end{split}\end{equation*}
 Then for any nonnegative
$f\in \B_b(\R^{d})$ and any $p>1$,
 \beg{equation*}\beg{split}  (P_Tf)^p(y)\le  &P_Tf^p(x)
  \left (1-\frac{(2p+2)\beta(T,K,\delta,\kappa)|x-y|^2}{(p-1)^2}\right)^{-\frac{p-1}{2}}
  \end{split}\end{equation*}
  holds for any $x,y\in \R^{d}$ with $|x-y|^2<\left(\frac{(2p+2)\beta(T,K,\delta,\kappa)}{(p-1)^2}\right)^{-1}$.
 \end{thm}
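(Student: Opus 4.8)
The plan is to implement the coupling by change of measure announced in the introduction. Fix $x,y\in\R^d$ and $T>0$; let $(X_t)_{t\in[0,T]}$ solve \eqref{EH} from $x$ driven by $W$ under $\P$, put $z_t=\frac{T-t}{T}(x-y)$ and \emph{define} the companion process by $Y_t:=X_t-z_t$. Then $Y_0=y$, $Y_T=X_T$, and since $\d z_t=-\frac{x-y}{T}\,\d t$ one checks that $Y$ solves $\d Y_t=\{b_t(Y_t)+\sigma_t u_t\}\,\d t+\sigma_t\,\d W_t$ with $\sigma_t u_t=b_t(X_t)-b_t(Y_t)+\frac{x-y}{T}$, i.e. $u_t=\sigma_t^\ast(\sigma_t\sigma_t^\ast)^{-1}\{b_t(X_t)-b_t(Y_t)+\frac{x-y}{T}\}$. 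With the Girsanov density
\[ R=\exp\Big(-\int_0^T\<u_s,\d W_s\>-\tfrac12\int_0^T|u_s|^2\,\d s\Big), \]
the shifted process $W+\int_0^{\cdot}u_s\,\d s$ is a Brownian motion under $\d\Q=R\,\d\P$, and under $\Q$ the process $Y$ solves \eqref{EH} from $y$; hence $P_Tf(y)=\E_\Q f(Y_T)=\E\{Rf(X_T)\}$. Hölder's inequality with exponents $p$ and $\frac{p}{p-1}$ then gives $(P_Tf(y))^p\le P_Tf^p(x)\,(\E R^{\lambda})^{p-1}$ with $\lambda:=\frac{p}{p-1}$, so the theorem reduces to bounding the single moment $\E R^\lambda$.

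To estimate $\E R^\lambda$ I would tilt once more. Writing $R^\lambda=M_T\,\exp(\frac{\lambda(\lambda-1)}2\int_0^T|u_s|^2\,\d s)$ with $M_T:=\exp(-\lambda\int_0^T\<u_s,\d W_s\>-\frac{\lambda^2}2\int_0^T|u_s|^2\,\d s)$, the factor $M_T$ is a genuine martingale---its required exponential integrability is exactly \eqref{Kh}---so $\E R^\lambda=\E_{\Q'}\exp(\frac{\lambda(\lambda-1)}2\int_0^T|u_s|^2\,\d s)$ for $\d\Q'=M_T\,\d\P$. Under $\Q'$ the process $X$ solves an equation of the form \eqref{EH} whose drift is $(1-\lambda)b_t(\cdot)+\lambda b_t(\cdot-z_t)$ up to the constant vector $-\lambda(x-y)/T$; its $L^q_p(T)$-norm is at most $(2\lambda-1)\|b\|_{L^q_p(T)}$, so Lemma \ref{KK} continues to apply to $X$ under $\Q'$. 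By {\bf(H2)} I bound $|u_s|^2\le\delta|b_s(X_s)-b_s(Y_s)+\frac{x-y}{T}|^2\le 2\delta|b_s(X_s)-b_s(Y_s)|^2+\frac{2\delta}{T^2}|x-y|^2$, whose deterministic part integrates to the ``pull'' contribution $\frac{2\delta}{T}|x-y|^2$.

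The crux---and the main obstacle---is that \eqref{b-co} controls $b$ only in an integrated $L^p$ sense, so $|b_s(X_s)-b_s(Y_s)|$ cannot be estimated pointwise along trajectories. This is overcome precisely because the coupling was designed so that $X_s-Y_s=z_s$ is \emph{deterministic}: putting $\psi_s(w):=b_s(w)-b_s(w-z_s)$ gives $b_s(X_s)-b_s(Y_s)=\psi_s(X_s)$, and \eqref{b-co} yields $\||\psi_s|^2\|_{L^{p/2}}=\|\psi_s\|_{L^p}^2\le K(s)^2|z_s|^2\le K(s)^2|x-y|^2$, hence $\||\psi|^2\|_{L^{q/2}_{p/2}(T)}\le\|K\|_{L^q([0,T])}^2\,|x-y|^2$. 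Since {\bf(H1)}, namely $\frac dp+\frac2q<1$, is exactly the statement $(\frac p2,\frac q2)\in\scr K$, I may invoke \eqref{KR2} of Lemma \ref{KK} for $X$ under $\Q'$ with $f=|\psi|^2$, getting $\E_{\Q'}\exp(\lambda(\lambda-1)\delta\int_0^T|\psi_s|^2(X_s)\,\d s)\le(1-\lambda(\lambda-1)\delta\kappa\|K\|_{L^q([0,T])}^2|x-y|^2)^{-1}$ whenever the denominator is positive.

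Finally I would assemble the two factors. The pull factor obeys $\e^{\lambda(\lambda-1)\delta|x-y|^2/T}\le(1-\lambda(\lambda-1)\delta|x-y|^2/T)^{-1}$ by $\e^a\le(1-a)^{-1}$, and multiplying it with the Krylov factor and using $(1-a)^{-1}(1-b)^{-1}\le(1-a-b)^{-1}$ collapses everything to
\[ \E R^\lambda\le\Big(1-\lambda(\lambda-1)\,\delta\big(\tfrac1T+\kappa\|K\|_{L^q([0,T])}^2\big)|x-y|^2\Big)^{-1}=\big(1-\lambda(\lambda-1)\beta|x-y|^2\big)^{-1}. \]
Raising to the $(p-1)$-th power and using $(1-a)^{-(p-1)}\le(1-2a)^{-(p-1)/2}$ gives $(\E R^\lambda)^{p-1}\le(1-2\lambda(\lambda-1)\beta|x-y|^2)^{-(p-1)/2}$; since $2\lambda(\lambda-1)=\frac{2p}{(p-1)^2}\le\frac{2p+2}{(p-1)^2}$, this is dominated by the factor $(1-\frac{(2p+2)\beta|x-y|^2}{(p-1)^2})^{-(p-1)/2}$ of the theorem, on the stated range $|x-y|^2<(\frac{(2p+2)\beta}{(p-1)^2})^{-1}$, which is also what makes \eqref{KR2} legitimate. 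The delicate points to check are the true-martingale property of $R$ and $M_T$ (both supplied by \eqref{Kh}) and that the tilted drift keeps $X$ within the scope of Lemma \ref{KK}.
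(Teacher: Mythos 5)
Your coupling is exactly the paper's: $Y_t=X_t-\frac{T-t}{T}(x-y)$ is the process the paper writes as $Y_s=X^x_s+\frac{(s-T)(x-y)}{T}$, your $u$ is its $\sigma_t^\ast(\sigma_t\sigma_t^\ast)^{-1}\Phi(t)$, and the reduction $(P_Tf(y))^p\le P_Tf^p(x)\,(\E R^{p/(p-1)})^{p-1}$ via Girsanov, weak uniqueness and H\"older is verbatim the paper's \eqref{Holder}. Where you genuinely diverge is the moment estimate for $R$. The paper uses the Cauchy--Schwarz trick $(\E R(T)^{p/(p-1)})^2\le\E^{\Q_T}\exp\{\frac{p+1}{(p-1)^2}\int_0^T|u_s|^2\d s\}$, where $\Q_T=R(T)\P$ is the coupling measure itself; the point of that choice is that under $\Q_T$ the process $Y$ solves the \emph{original} equation \eqref{EH} from $y$, so \eqref{APP'} and \eqref{KR2} apply with the very constant $\kappa$ of Lemma \ref{KK}, and \eqref{b-co} enters exactly as in your $\psi$-computation. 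You instead tilt by $M_T$ to a measure $\Q'$ under which $X$ solves a \emph{different} equation, with drift $(1-\lambda)b+\lambda b(\cdot-z_\cdot)-\lambda\frac{x-y}{T}$. This buys a marginally better coefficient ($\frac{2p}{(p-1)^2}$ in place of $\frac{2p+2}{(p-1)^2}$), but costs two things you gloss over. First, the constant vector $-\lambda\frac{x-y}{T}$ is not in $L^p(\R^d)$ in the space variable, so the tilted drift does not satisfy \eqref{b-in} as written and Lemma \ref{KK} does not literally apply to $X$ under $\Q'$; you must first remove it by a further deterministic space shift (harmless, but it has to be said). Second, and more substantively, the constant $\kappa$ in Lemma \ref{KK} depends on the $L^q_p(T)$-norm of the drift, which for the tilted equation is $(2\lambda-1)\|b\|_{L^q_p(T)}$ with $\lambda=\frac{p}{p-1}$; so your final bound carries a $p$-dependent Krylov constant rather than the $\kappa$ appearing in the theorem's $\beta(T,K,\delta,\kappa)$. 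The inequality you obtain is therefore of the stated form but not with the stated constant. If you want the theorem verbatim, replace the $M_T$-tilting by the paper's Cauchy--Schwarz step under $\Q_T$; everything else in your argument --- Novikov via \eqref{Kh}, the deterministic-difference trick converting $|b_s(X_s)-b_s(Y_s)|$ into the fixed function $\psi_s$ with $\||\psi|^2\|_{L^{q/2}_{p/2}(T)}\le\|K\|^2_{L^q([0,T])}|x-y|^2$, and the $\e^a\le(1-a)^{-1}$, $(1-a)^{-1}(1-b)^{-1}\le(1-a-b)^{-1}$ bookkeeping --- matches the paper's and is fine.
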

The next corollary following from Theorem \ref{T-Har} describes the property of the transition probability, see \cite[Theorem 1.4.2 (1)]{Wbook} for the proof.
\begin{cor}\label{density0} Let the assumption in Theorem \ref{T-Har} hold. Let $T>0$, $p>1$, $x,y\in\mathbb{R}^d$ with $|x-y|^2<\left(\frac{(2p+2)\beta(T,K,\delta,\kappa)}{(p-1)^2}\right)^{-1}$.  Then $P_T(x,\cdot)$ is equivalent to $P_T(y,\cdot)$ and
$$P_T\left\{\left(\frac{\d P_T(x,\cdot)}{\d P_T(y,\cdot)}\right)^{\frac{1}{p-1}}\right\}(x)\leq \left (1-\frac{(2p+2)\beta(T,K,\delta,\kappa)|x-y|^2}{(p-1)^2}\right)^{-\frac{1}{2}}.$$
\end{cor}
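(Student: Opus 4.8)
Write $\mu:=P_T(x,\cdot)$ and $\nu:=P_T(y,\cdot)$, and abbreviate $\Theta:=\big(1-\tfrac{(2p+2)\beta(T,K,\delta,\kappa)|x-y|^2}{(p-1)^2}\big)^{-\frac{p-1}{2}}$, so that Theorem \ref{T-Har} reads $\big(\int f\,\d\nu\big)^p\le\Theta\int f^p\,\d\mu$ for all nonnegative $f\in\B_b(\R^d)$, and the claimed bound is $\int(\d\mu/\d\nu)^{1/(p-1)}\,\d\mu\le\Theta^{1/(p-1)}$. The crucial observation driving the plan is that $\Theta$ depends on $x,y$ only through $|x-y|$, hence is symmetric under interchanging $x$ and $y$; consequently Theorem \ref{T-Har} yields simultaneously both
$$\Big(\int f\,\d\nu\Big)^p\le\Theta\int f^p\,\d\mu \quad\text{and}\quad \Big(\int f\,\d\mu\Big)^p\le\Theta\int f^p\,\d\nu,\qquad 0\le f\in\B_b(\R^d),$$
valid under the stated smallness condition on $|x-y|$. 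This is the standard route from a symmetric Harnack inequality to density estimates, as in \cite[Theorem 1.4.2 (1)]{Wbook}, and the plan is to carry it out in two steps: equivalence of the measures, then the quantitative bound via an extremal test function.

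First I would establish $\mu\sim\nu$. Taking $f=\mathbf 1_A$ in the first inequality gives $\nu(A)^p\le\Theta\,\mu(A)$, so $\mu(A)=0\Rightarrow\nu(A)=0$, i.e.\ $\nu\ll\mu$; taking $f=\mathbf 1_A$ in the swapped inequality gives $\mu(A)^p\le\Theta\,\nu(A)$, so $\nu(A)=0\Rightarrow\mu(A)=0$, i.e.\ $\mu\ll\nu$. Hence $\mu$ and $\nu$ are equivalent and $\rho:=\d\mu/\d\nu$ is well defined, $\nu$-a.e.\ finite and positive.

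For the quantitative bound I would feed a truncated version of the extremal test function into the swapped inequality. For $n\ge 1$ set $f_n:=\rho^{1/(p-1)}\mathbf 1_{\{\rho\le n\}}$, which is nonnegative, bounded and measurable, hence admissible. Using $\d\mu=\rho\,\d\nu$ one computes $\int f_n\,\d\mu=\int\rho^{p/(p-1)}\mathbf 1_{\{\rho\le n\}}\,\d\nu=:I_n$ and likewise $\int f_n^p\,\d\nu=\int\rho^{p/(p-1)}\mathbf 1_{\{\rho\le n\}}\,\d\nu=I_n$, where $I_n<\infty$ because the integrand is bounded and $\nu$ is a probability measure. The swapped Harnack inequality then reads $I_n^p\le\Theta\,I_n$, whence $I_n\le\Theta^{1/(p-1)}$ when $I_n>0$ (and trivially otherwise). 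Letting $n\to\infty$ and invoking monotone convergence gives $\int\rho^{p/(p-1)}\,\d\nu\le\Theta^{1/(p-1)}$; since $\int\rho^{p/(p-1)}\,\d\nu=\int\rho^{1/(p-1)}\rho\,\d\nu=\int\rho^{1/(p-1)}\,\d\mu$, this is exactly $P_T\{(\d P_T(x,\cdot)/\d P_T(y,\cdot))^{1/(p-1)}\}(x)\le\Theta^{1/(p-1)}$, and $\Theta^{1/(p-1)}=\big(1-\tfrac{(2p+2)\beta(T,K,\delta,\kappa)|x-y|^2}{(p-1)^2}\big)^{-1/2}$ is precisely the claimed right-hand side.

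The argument is essentially soft, so there is no serious analytic obstacle; the two points requiring care are the recognition that the Harnack constant is symmetric in $(x,y)$—which both delivers the equivalence of measures and licenses the swapped inequality on which the density bound rests—and the truncation $\mathbf 1_{\{\rho\le n\}}$, which guarantees $I_n<\infty$ so that $I_n^p\le\Theta I_n$ may be divided by $I_n$ before passing to the monotone limit.
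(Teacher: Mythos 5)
Your proposal is correct and is essentially the argument the paper invokes by citing \cite[Theorem 1.4.2 (1)]{Wbook}: exploit the symmetry of the Harnack constant in $(x,y)$ to get both directions of absolute continuity from indicator test functions, then apply the swapped inequality to the truncated extremal function $\rho^{1/(p-1)}\mathbf 1_{\{\rho\le n\}}$ and pass to the monotone limit. No gaps; the exponents and the identification $\Theta^{1/(p-1)}=\left(1-\frac{(2p+2)\beta(T,K,\delta,\kappa)|x-y|^2}{(p-1)^2}\right)^{-1/2}$ check out.
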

\subsection{Shift Harnack Inequality and Its Applications}
The following theorem gives the result on the shift Harnack inequality.
\begin{thm}\label{T-shift}  Let $T>0$. Assume \eqref{b-in}
   and {\bf (H2)}. Then the following assertions hold.
\begin{enumerate}
\item[(i)] For any $x,y\in \R^{d}$ and positive
$f\in \B_b(\R^{d})$, the shift log-Harnack inequality holds, i.e.
\beg{equation*}\beg{split}
&P_T\log f(x)\leq\log P_T f(y+\cdot)(x)+ \delta\left(\frac{|y|^2}{T}+4\kappa\|b\|^2_{L_p^{q}(T)}\right).\end{split}\end{equation*}
Moreover, for any $p>1$, and any nonnegative
$f\in \B_b(\R^{d})$, it holds that
 \beg{equation*}\beg{split}  (P_Tf)^p(x)\le  &P_Tf^p(y+\cdot)(x)  \gamma\e ^{\frac{\delta(p+1)|y|^2}{2(p-1)T}}\end{split}\end{equation*}
with some constant $\gamma>0$ depending on $p, \kappa,\delta, \|b\|^2_{L_p^{q}(T)}$.
\item[(ii)] If in addition \eqref{b-co} holds, then for any $x,y\in \R^{d}$ and positive
$f\in \B_b(\R^{d})$, the shift log-Harnack inequality holds, i.e.
\beg{equation*}\beg{split}
&P_T\log f(x)\leq\log P_T f(y+\cdot)(x)+ \beta(T,K,\delta,\kappa)|y|^2.\end{split}\end{equation*}
Moreover, for any $p>1$, and any nonnegative
$f\in \B_b(\R^{d})$,
 \beg{equation*}\beg{split}  (P_Tf)^p(x)\le  &P_Tf^p(y+\cdot)(x)
  \left (1-\frac{(2p+2)\beta(T,K,\delta,\kappa)|y|^2}{(p-1)^2}\right)^{-\frac{p-1}{2}}\end{split}\end{equation*}
  holds for any $x,y \in\mathbb{R}^d$ with $|y|^2<\left(\frac{(2p+2)\beta(T,K,\delta,\kappa)}{(p-1)^2}\right)^{-1}$. Here, $\beta(T,K,\delta,\kappa)$ is defined in Theorem \ref{T-Har}.
\end{enumerate}
 \end{thm}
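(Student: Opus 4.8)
The plan is to realize both inequalities through a \emph{shift coupling} constructed by change of measure. Let $(X_t)_{t\in[0,T]}$ solve \eqref{EH} from $x$ under $\P$, put $\tilde X_t=X_t+\frac tT y$ (so $\tilde X_0=x$, $\tilde X_T=X_T+y$), and note that $\d\tilde X_t=\big(b_t(X_t)+\frac yT\big)\d t+\sigma_t\d W_t$. Setting $\xi_t=\sigma_t^{-1}\big(b_t(X_t)-b_t(\tilde X_t)+\frac yT\big)$ and $R=\exp\big(-\int_0^T\langle\xi_t,\d W_t\rangle-\frac12\int_0^T|\xi_t|^2\,\d t\big)$, the measure $\d\Q=R\,\d\P$ turns $\tilde W_t:=W_t+\int_0^t\xi_s\,\d s$ into a Brownian motion under which $\tilde X$ solves \eqref{EH} from $x$. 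The first task is to verify that $R$ is a genuine density: since $|\xi_t|^2\le\delta\big|b_t(X_t)-b_t(\tilde X_t)+\frac yT\big|^2$ and $\tilde X$ is an \eqref{EH}-solution under $\Q$, the exponential integrability \eqref{Kh} of $\int_0^T|\xi_t|^2\,\d t$ follows from Lemma \ref{KK} applied to $\tilde X$ with $f=|b|^2\in L_{p/2}^{q/2}(T)$ (here $(\frac p2,\frac q2)\in\scr K$ exactly because $\frac dp+\frac2q<1$); a localization then gives $\E_\P R=1$. Consequently $\E_\Q g(\tilde X_T)=P_Tg(x)$ for every $g$, i.e. $P_Tg(x)=\E_\Q g(X_T+y)=\E_\P\big(R\,g(X_T+y)\big)$. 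A point to keep in mind throughout is that $\tilde X$ is an \eqref{EH}-solution only under $\Q$.

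For the shift log-Harnack inequalities I take $g=\log f$ and use the entropy inequality $\E_\Q h\le\log\E_\P\e^{h}+\mathrm{Ent}(\Q|\P)$ with $h=\log f(X_T+y)$, giving $P_T\log f(x)\le\log P_Tf(y+\cdot)(x)+\mathrm{Ent}(\Q|\P)$. Rewriting $\log R$ under $\Q$ (where $\d W_t=\d\tilde W_t-\xi_t\,\d t$) yields $\mathrm{Ent}(\Q|\P)=\frac12\E_\Q\int_0^T|\xi_t|^2\,\d t$, and I bound $|\xi_t|^2\le2\delta|b_t(X_t)-b_t(\tilde X_t)|^2+2\delta\frac{|y|^2}{T^2}$. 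For (i) I estimate the difference by $2|b_t(\tilde X_t-\frac tT y)|^2+2|b_t(\tilde X_t)|^2$ and apply Krylov's estimate \eqref{APP'} to $\tilde X$ under $\Q$ term by term, each giving $\kappa\|b\|_{L_p^q(T)}^2$ and assembling to $\delta\big(\frac{|y|^2}T+4\kappa\|b\|_{L_p^q(T)}^2\big)$. For (ii) the role of \eqref{b-co} is that $b_t(X_t)-b_t(\tilde X_t)=h_t(\tilde X_t)$ with $h_t(z)=b_t(z-\frac tT y)-b_t(z)$ satisfying $\|h_t\|_p\le K(t)|y|$, so Krylov's estimate with $f=|h|^2$ (whose $L_{p/2}^{q/2}(T)$-norm is at most $\|K\|_{L^q([0,T])}^2|y|^2$) gives $\mathrm{Ent}(\Q|\P)\le\delta\big(\frac{|y|^2}T+\kappa\|K\|_{L^q([0,T])}^2|y|^2\big)=\beta(T,K,\delta,\kappa)|y|^2$.

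For the power inequalities, Hölder's inequality with exponents $p$ and $p':=\frac p{p-1}$ applied to $P_Tf(x)=\E_\P\big(R\,f(X_T+y)\big)$ gives $(P_Tf)^p(x)\le P_Tf^p(y+\cdot)(x)\,\big(\E_\P R^{p'}\big)^{p-1}$, and I rewrite $\E_\P R^{p'}=\E_\Q R^{p'-1}$ to work under $\Q$. A Cauchy--Schwarz step on $R^{p'-1}$, in which the square-integrable martingale factor $\exp(-2(p'-1)\int_0^T\langle\xi_t,\d\tilde W_t\rangle-2(p'-1)^2\int_0^T|\xi_t|^2\,\d t)$ has $\Q$-expectation one, leaves $\E_\Q R^{p'-1}\le\big(\E_\Q\exp(\frac{p+1}{(p-1)^2}\int_0^T|\xi_t|^2\,\d t)\big)^{1/2}$, since $2(p'-1)^2+(p'-1)=\frac{p+1}{(p-1)^2}$. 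In case (ii) I split $|\xi_t|^2\le2\delta|h_t(\tilde X_t)|^2+2\delta\frac{|y|^2}{T^2}$ with $h_t$ as above: the deterministic term yields, after the outer square root, $\e^{x}$ with $x=\frac{(p+1)\delta|y|^2}{(p-1)^2T}$, controlled by $\e^{x}\le(1-2x)^{-1/2}$; the random term is bounded by the sharp estimate \eqref{KR2} (using $\||h|^2\|_{L_{p/2}^{q/2}(T)}\le\|K\|_{L^q([0,T])}^2|y|^2$) by $\big(1-\frac{2(p+1)\delta\kappa\|K\|_{L^q([0,T])}^2|y|^2}{(p-1)^2}\big)^{-1/2}$. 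Since the two coefficients share the common factor $\frac{2(p+1)\delta}{(p-1)^2}$, multiplying and applying $(1-a)(1-b)\ge1-a-b$ collapses the product into $\big(1-\frac{(2p+2)\beta(T,K,\delta,\kappa)|y|^2}{(p-1)^2}\big)^{-1/2}$; raising to the power $p-1$ gives exactly the asserted factor on the stated range. Case (i) follows the same scheme, except that without \eqref{b-co} the drift difference is estimated crudely and its exponential moment is absorbed into the uniform constant $\gamma$ of \eqref{Kh}, while the deterministic Cameron--Martin direction $\sigma_t^{-1}\frac yT$ is separated so as to contribute precisely the Gaussian factor $\e^{\frac{\delta(p+1)|y|^2}{2(p-1)T}}$, giving $\gamma\,\e^{\frac{\delta(p+1)|y|^2}{2(p-1)T}}$.

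I expect the main obstacle to lie not in the algebra but in the measure-theoretic justification underneath it. First, one must show that $R$ — and the auxiliary Cauchy--Schwarz density in the power case — is a true martingale with unit expectation; because $\tilde X$ solves \eqref{EH} only under $\Q$, the standalone term $b_t(\tilde X_t)$ has no Krylov control under $\P$, which is precisely why every entropy and exponential-moment computation above is carried out under $\Q$ and why case (i) is forced to keep $b_t(\tilde X_t)$ only in combinations controllable there. The martingale property is obtained by a localization in which the stopped densities are genuine martingales and the passage to the limit uses the a.s. finiteness of $\int_0^T|\xi_t|^2\,\d t$ furnished by Lemma \ref{KK}. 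Second, the sharp constant in (ii) hinges on the interplay $(1-a)^{-1/2}(1-b)^{-1/2}\le(1-a-b)^{-1/2}$ and $\e^{x}\le(1-2x)^{-1/2}$ being arranged, via the factor-two split, so that the deterministic $\frac{|y|^2}T$ term and the \eqref{KR2} term enter with the common coefficient $\frac{2(p+1)\delta}{(p-1)^2}$ and merge into $\frac{(2p+2)\beta}{(p-1)^2}$; reproducing the exact constants — including the precise Gaussian factor in (i), where the deterministic direction must be isolated without inflating its coefficient — is the most delicate bookkeeping.
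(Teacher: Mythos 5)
Your proposal is correct and follows essentially the same route as the paper: the shift coupling $\tilde Y_t=X^x_t+\frac tT y$, Girsanov's theorem, Young's (entropy) inequality for the shift log-Harnack inequality, and H\"older's inequality combined with the Krylov/Khasminskii estimates of Lemma \ref{KK} for the power version, with the triangle inequality yielding (i) and \eqref{b-co} yielding (ii). The only real difference is that the paper verifies the Novikov-type condition directly under $\P$ by observing that $b_t(\tilde Y_t)=b_t\bigl(X^x_t+\frac tT y\bigr)$ is a deterministic translate of $b$ evaluated along $X^x$ (so Krylov's estimate for $X^x$ applies with the unchanged $L^q_p$-norm), which avoids the localization-under-$\Q$ argument you invoke.
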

According to \cite[Theorem 1.4.3, Proposition 1.3.9 (2)]{Wbook}, we have the following corollary from Theorem \ref{T-shift}.
\begin{cor}\label{C-density} Let $T>0$. Assume \eqref{b-in}
   and {\bf (H2)}.
\begin{enumerate}
\item[(i)]  For any $x,y\in \R^{d}$,  $P_T$ has transition density $p_T(x,y)$ with respect to the Lebesgue measure such that
$$\int_{\mathbb{R}^{d}}p_T(x,y)^{\frac{p}{p-1}}\d y\leq \frac{1}{\left(\gamma^{-1}\int_{\mathbb{R}^{d}}\e ^{-\frac{\delta(p+1)|y|^2}{2(p-1)T}}\d y\right)^{\frac{1}{p-1}}}$$
for any $p>1$ and some constant $\gamma>0$ depending on $p, \kappa,\delta, \|b\|^2_{L_p^{q}(T)}$.
\item[(ii)] If in addition \eqref{b-co} holds, then
$$|P_T(\nabla_yf)(x)|^2\leq 2\beta(T,K,\delta,\kappa)\{P_Tf^2(x)-(P_Tf)^2(x)\}, \ \ x,y \in\mathbb{R}^d, f \in C^1_b(\mathbb{R}^d).$$
\end{enumerate}
Moreover, for any $p>1$, $x,y \in\mathbb{R}^d$ with $|y|^2<\left(\frac{(2p+2)\beta(T,K,\delta,\kappa)}{(p-1)^2}\right)^{-1}$, $P_T(x,\cdot)$ is equivalent to $P_T(x,\cdot-y)$ and
$$P_T\left\{\left(\frac{\d P_T(x,\cdot)}{\d P_T(x,\cdot-y)}\right)^{\frac{1}{p-1}}\right\}(x)\leq \left (1-\frac{(2p+2)\beta(T,K,\delta,\kappa)|y|^2}{(p-1)^2}\right)^{-\frac{1}{2
}}.$$
\end{cor}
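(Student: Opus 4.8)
All three assertions are standard consequences of the shift Harnack inequalities of Theorem \ref{T-shift}, specializing \cite[Theorem 1.4.3, Proposition 1.3.9 (2)]{Wbook}; I indicate the mechanism in each case. For (i), the plan is to integrate out the shift in the power shift Harnack inequality of Theorem \ref{T-shift}(i). Writing $C(y):=\gamma\e^{\frac{\delta(p+1)|y|^2}{2(p-1)T}}$, that inequality gives $(P_Tf(x))^pC(y)^{-1}\le P_T(f^p(\cdot+y))(x)$ for every nonnegative $f$ and every $y\in\R^d$; integrating in $y$, the right-hand side becomes $P_T(\int_{\R^d}f^p(\cdot+y)\d y)(x)=\|f\|_{L^p}^p$ by Tonelli and translation invariance of Lebesgue measure, so $P_Tf(x)\le\|f\|_{L^p}(\int_{\R^d}C(y)^{-1}\d y)^{-1/p}$. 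Thus the positive functional $f\mapsto P_Tf(x)$ is bounded on $L^p(\R^d)$, and by $L^p$--$L^{p/(p-1)}$ duality $P_T(x,\cdot)$ admits a density $p_T(x,\cdot)$ with $\|p_T(x,\cdot)\|_{L^{p/(p-1)}}\le(\int_{\R^d}C(y)^{-1}\d y)^{-1/p}$; raising to the power $p/(p-1)$ and noting $\int_{\R^d}C(y)^{-1}\d y=\gamma^{-1}\int_{\R^d}\e^{-\frac{\delta(p+1)|y|^2}{2(p-1)T}}\d y$ gives exactly the stated bound.

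For (ii), I would linearize the shift log-Harnack inequality of Theorem \ref{T-shift}(ii). Insert $f=1+sg$ with $g\in C_b^1(\R^d)$ and $s$ small, expand both sides to second order in $s$, and use $P_T(g(\cdot+y))(x)=P_Tg(x)+P_T(\nabla_yg)(x)+O(|y|^2)$ for the first-order effect of the shift. The inequality then reduces to the requirement that the quadratic $s\mapsto\frac12(\var_T g)\,s^2+P_T(\nabla_yg)(x)\,s+\beta(T,K,\delta,\kappa)|y|^2$, with $\var_T g:=P_Tg^2(x)-(P_Tg(x))^2\ge0$, be nonnegative for all small $s$. Nonnegativity forces its discriminant to be nonpositive, i.e. $|P_T(\nabla_yg)(x)|^2\le2\beta(T,K,\delta,\kappa)|y|^2\,\var_T g$, which in the normalization of the statement (unit shift direction $y$) is precisely the claimed gradient--variance estimate.

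For the final ``moreover'' assertion, I would run the classical Harnack-to-density argument on the power shift Harnack inequality of Theorem \ref{T-shift}(ii). Set $\mu=P_T(x,\cdot)$ and $\mu^y(A)=P_T(x,A-y)$, so that $P_Tf(\cdot+y)(x)=\int f\,\d\mu^y$ and the inequality reads $(\mu(f))^p\le\mu^y(f^p)\,C$ with $C=(1-\frac{(2p+2)\beta(T,K,\delta,\kappa)|y|^2}{(p-1)^2})^{-\frac{p-1}{2}}$. Testing with indicators shows $\mu^y(A)=0\Rightarrow\mu(A)=0$, hence $\mu\ll\mu^y$; repeating with $-y$ gives the reverse absolute continuity and thus equivalence of $P_T(x,\cdot)$ and $P_T(x,\cdot-y)$. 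Finally, with $\rho=\d\mu/\d\mu^y$, testing the inequality against (a truncation of) $f=\rho^{1/(p-1)}$ gives $I^p\le I\,C$ for $I:=\int\rho^{p/(p-1)}\d\mu^y=\int\rho^{1/(p-1)}\d\mu$, whence $I\le C^{1/(p-1)}=(1-\frac{(2p+2)\beta(T,K,\delta,\kappa)|y|^2}{(p-1)^2})^{-1/2}$, which is the asserted bound on $P_T\{(\d P_T(x,\cdot)/\d P_T(x,\cdot-y))^{1/(p-1)}\}(x)$.

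The most delicate step is (ii): one must carry out the joint expansion in the test-function parameter $s$ and the shift $y$ cleanly, control the remainders uniformly, and track the $|y|$-scaling of $\nabla_y$ so that the discriminant step produces the constant $2\beta(T,K,\delta,\kappa)$ correctly. The density manipulations in (i) and in the last part are routine once the $L^p$ bound and absolute continuity are in hand, needing only the usual truncation-and-monotone-convergence justification for the unbounded test function $\rho^{1/(p-1)}$.
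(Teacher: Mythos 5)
Your proposal is correct and takes essentially the same route as the paper, whose entire proof is the citation of \cite[Theorem 1.4.3, Proposition 1.3.9 (2)]{Wbook} applied to Theorem \ref{T-shift}: you have simply unpacked those citations (integrating the shift Harnack inequality over the shift and using $L^p$--$L^{p/(p-1)}$ duality for (i), the joint second-order expansion and discriminant argument for (ii), and indicator/truncation tests of the power shift Harnack inequality for the final assertion). Worth noting: your derivation in (ii) correctly produces the factor $|y|^2$ on the right-hand side, which confirms that the corollary as printed (stated for all $y\in\R^d$ without that factor, despite $\nabla_y f$ scaling linearly in $y$) should be read either with $|y|=1$ or with the $|y|^2$ factor restored.
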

\section{Proof of Theorem \ref{T-Har}}
We use the coupling by change of measure to derive the Harnack inequality.
\begin{proof}[Proof of Theorem \ref{T-Har}]
For any $x\in\mathbb{R}^d$, let $X^x_t$ solve (\ref{EH})  with $X_0= x$,
and $Y_t$ solve the equation
\beq\label{EC1} \d Y_t= b_t(X_t^x)\d t +\si_t \d W_t+ \frac{x-y}{T}\d t\end{equation} with
$Y_0= y$. Then we have

\beq\label{EE} Y_s=X^x_s+\frac{(s-T)(x-y)}{T}, \ \
s\in[0,T].\end{equation}
In particular, $X_T^x=Y_T$. Set
\begin{align*}
R(s)=\exp\bigg[-\int_0^s\< \si_u^\ast(\si_u\si^\ast_u)^{-1}\Phi(u), \d
W_u\>-\frac{1}{2}\int_0^s |\si_u^\ast(\si_u\si_u^\ast)^{-1}\Phi(u)|^2\d u\bigg],
\end{align*}
and
$$
\bar{W}_s=W_s+\int_0^s\si_u^\ast(\si_u\si^\ast_u)^{-1}\Phi(u)\d u,
$$
where
$$
\Phi(s)=b_s(X_s^x)-b_s(Y_s)+\frac{x-y}{T}.
$$
By Lemma \ref{KK} for $X_t^x$ and $\alpha=p/2$, $\beta=q/2$, \eqref{b-in} and \eqref{EE} imply that
\begin{align*}\mathbb{E}\int_0^T \left|b_s(X_s^x)-b_s(Y_s)\right|^2\d s&=\mathbb{E}\int_0^T \left|b_s(X_s^x)-b_s\left(X^x_s+\frac{(s-T)(x-y)}{T}\right)\right|^2\d s\\
&\leq \kappa\left\{\int_{0}^T\left(\int_{\mathbb{R}^d}\left|b_s(z+\frac{(s-T)(x-y)}{T})-b_s(z)\right|^{p}\d z\right)^{\frac{q}{p}}\d s\right\}^{\frac{2}{q}}\\
&\leq 4\kappa\|b\|^2_{L^q_p(T)}.
\end{align*}
Then by \eqref{Kh} and {\bf(H2)}, we have
$$\mathbb{E}\exp\left\{\int_0^T |\si_u^\ast(\si_u\si_u^\ast)^{-1}\Phi(u)|^2\d u\right\}<\infty.$$
By Girsanov's theorem, $\{\bar{W}_s\}_{s\in[0,T]}$ is a Brownian motion under $\Q_T=R(T)\P$.
Then (\ref{EC1}) reduces to
\beq\label{E2'}
\d Y_t= b_t(Y_t)\d t +\si_t \d \bar{W}_t,
\end{equation}
which together with the weak uniqueness of \eqref{EH} implies the distribution of $Y_T$ under $\Q_T$ coincides with the one of $X_T^y$  under $\P$.
Thus, from \eqref{EE}, \eqref{b-co}, and \eqref{APP'} with $\alpha=p/2$, $\beta=q/2$, it holds that
\begin{equation}\begin{split}\label{Phi0}
&\E^{\Q_T}\int_0^T|\Phi(s)|^2\d s\\
&\leq 2\E^{\Q_T}\int_0^T\left(\frac{|x-y|^2}{T^2}+\left|b_s(X_s^x)-b_s(Y_s)\right|^2\right)\d s\\
&= 2\frac{|x-y|^2}{T}+2\E^{\Q_T}\int_0^T\left|b_s(X_s^x)-b_s(Y_s)\right|^2\d s\\
&=2\frac{|x-y|^2}{T}+2\mathbb{E}^{\Q_T}\int_0^T \left|b_s(Y_s)-b_s\left(Y_s+\frac{(T-s)(x-y)}{T}\right)\right|^2\d s\\
&\leq2\frac{|x-y|^2}{T}+ 2\kappa\left\{\int_{0}^T\left(\int_{\mathbb{R}^d}\left|b_s(z+\frac{(T-s)(x-y)}{T})-b_s(z)\right|^{p}\d z\right)^{\frac{q}{p}}\d s\right\}^{\frac{2}{q}}\\
&\leq2\frac{|x-y|^2}{T}+2\kappa\left\{\int_0^TK(s)^{q}\d s\right\}^{\frac{2}{q}}|x-y|^2 \\
&\leq 2\left(\frac{1}{T}+\kappa\|K\|^2_{L^{q}([0,T])}\right)|x-y|^2.
\end{split}\end{equation}
%By Young's inequality,
%\begin{align*}
%P_T \log f(y)&=\E^{\Q_T}\log f(Y_T)=\E ^{\Q_T}\log f(X_T^x)\\
%&\leq \log P_T f(x)+\E R(T)\log R(T),
%\end{align*}
By H\"{o}lder inequality, we have
\begin{align}\label{Holder}
P_T f(y)&=\E^{\Q_T}f(Y_T)=\E ^{\Q_T}f(X_T^x)\leq (P_T f^p(x))^{\frac{1}{p}}\{\E R(T)^{\frac{p}{p-1}}\}^{\frac{p-1}{p}}.
\end{align}
Combining \eqref{Phi0} and {\bf(H2)}, it follows
%\begin{align*}
%\E R(T)\log R(T)=\E ^{\Q_T}\log R(T)=&\frac{1}{2}\E^{\Q_T} \int_0^T |(\si_u^\ast(\si_u\si^\ast_u)^{-1}\Phi(u)|^2\d u\\
%&\leq\beta(T,K)|x-y|^2.
%\end{align*}
from H\"{o}lder inequality and \eqref{KR2} that
\begin{align*}
\left(\E R(T)^{\frac{p}{p-1}}\right)^2&\leq\E^{\Q_T}\exp\left\{\frac{p+1}{(p-1)^2}\int_0^T |(\si_u^\ast(\si_u\si^\ast_u)^{-1}\Phi(u)|^2\d u\right\}\\
&\leq \frac{1}{1-\frac{(2p+2)\beta(T,K,\delta,\kappa)|x-y|^2}{(p-1)^2}}
\end{align*}
if $|x-y|^2<\left(\frac{(2p+2)\beta(T,K,\delta,\kappa)}{(p-1)^2}\right)^{-1}$.
Substituting this into \eqref{Holder}, we complete the proof.
\end{proof}
\section{Proof of Theorem \ref{T-shift}}
\begin{proof}[Proof of Theorem \ref{T-shift}]
For any $x\in\mathbb{R}^d$, let $X^x_t$ solve (\ref{EH})  with $X_0= x$,
and $\tilde{Y}_t$ solve the equation
\beq\label{EC1'} \d \tilde{Y}_t= b_t(X_t^x)\d t +\si_t \d W_t+ \frac{y}{T}\d t\end{equation} with
$\tilde{Y}_0= x$. Then we have
\beq\label{EE'} \tilde{Y}_s=X^x_s+\frac{s}{T}y, \ \
s\in[0,T].\end{equation}
In particular, $X_T^x+y=\tilde{Y}_T$. Let
\begin{align*}
\tilde{R}(s)=\exp\bigg[-\int_0^s\< \si_u^\ast(\si_u\si^\ast_u)^{-1}\tilde{\Phi}(u), \d
W_u\>-\frac{1}{2}\int_0^s |\si_u^\ast(\si_u\si_u^\ast)^{-1}\tilde{\Phi}(u)|^2\d u\bigg],
\end{align*}
and
$$
\tilde{W}_s=W_s+\int_0^s\si_u^\ast(\si_u\si^\ast_u)^{-1}\tilde{\Phi}(u)\d u,
$$
where $$\tilde{\Phi}(s)=b_s(X_s^x)-b_s(\tilde{Y}_s)+\frac{y}{T}.$$
Again by Lemma \ref{KK} for $X_t^x$ and $\alpha=p/2$, $\beta=q/2$, it follows from \eqref{b-in} and \eqref{EE'} that
\begin{align*}&\mathbb{E}\int_0^T \left|b_s(X_s^x)-b_s(\tilde{Y}_s)\right|^2\d s\\
&=\mathbb{E}\int_0^T \left|b_s(X_s^x)-b_s\left(X^x_s+\frac{s}{T}y\right)\right|^2\d s\\
&\leq \kappa\left\{\int_{0}^T\left(\int_{\mathbb{R}^d}\left|b_s(z+\frac{s}{T}y)-b_s(z)\right|^{p}\d z\right)^{\frac{q}{p}}\d s\right\}^{\frac{2}{q}}\\
&\leq \kappa\left\{\int_{0}^T\left(\left(\int_{\mathbb{R}^d}\left|b_s(z+\frac{s}{T}y)\right|^{p}\d z\right)^{\frac{1}{p}}+\left(\int_{\mathbb{R}^d}\left|b_s(z)\right|^{p}\d z\right)^{\frac{1}{p}}\right)^{q}\d s\right\}^{\frac{2}{q}}\\
&\leq 4\kappa\|b\|^2_{L^q_p(T)}.
\end{align*}
Then by \eqref{Kh} and {\bf(H2)}, we have
$$\mathbb{E}\exp\left\{\int_0^T |\si_u^\ast(\si_u\si_u^\ast)^{-1}\tilde{\Phi}(u)|^2\d u\right\}<\infty.$$
Applying Girsanov's theorem, we obtain that $\{\tilde{W}_s\}_{s\in[0,T]}$ is a Brownian motion under $\tilde{\Q}_T=\tilde{R}(T)\P$.
Then (\ref{EC1'}) reduces to
\beq\label{E2''}
\d \tilde{Y}_t= b_t(\tilde{Y}_t)\d t +\si_t \d \tilde{W}_t,
\end{equation}
and this together with the weak uniqueness of \eqref{EH} yields the distribution of $\tilde{Y}_T$ under $\tilde{\Q}_T$ coincides with the one of $X_T^y$  under $\P$.
By Young's inequality,
\begin{align*}
P_T \log f(x)&=\E^{\tilde{\Q}_T}\log f(\tilde{Y}_T)=\E ^{\tilde{\Q}_T}\log f(X_T^x+y)\leq \log P_T f(y+\cdot)(x)+\E \tilde{R}(T)\log \tilde{R}(T),
\end{align*}
and by H\"{o}lder inequality,
\begin{align*}
P_T f(x)&=\E^{\tilde{\Q}_T}f(\tilde{Y}_T)=\E ^{\tilde{\Q}_T}f(X_T^x+y)\leq (P_T f^p(y+\cdot))^{\frac{1}{p}}(x)\{\E \tilde{R}(T)^{\frac{p}{p-1}}\}^{\frac{p-1}{p}}.
\end{align*}
(i) \eqref{b-in}, \eqref{APP'} yield that
\begin{equation}\begin{split}\label{Phi'}
\E^{\tilde{\Q}_T}\int_0^T|\tilde{\Phi}(s)|^2\d s&\leq 2\E^{\tilde{\Q}_T}\int_0^T\left(\frac{|y|^2}{T^2}+\left|b_s(X_s^x)-b_s(\tilde{Y}_s)\right|^2\right)\d s\\
&= 2\frac{|y|^2}{T}+2\E^{\tilde{\Q}_T}\int_0^T\left(\left|b_s(X_s^x)-b_s(\tilde{Y}_s)\right|^2\right)\d s\\
&=2\frac{|y|^2}{T}+2\mathbb{E}^{\tilde{\Q}_T}\int_0^T \left|b_s(\tilde{Y}_s)-b_s\left(\tilde{Y}_s+\frac{s}{T}y\right)\right|^2\d s\\
&\leq2\frac{|y|^2}{T}+ 8\kappa\|b\|^2_{L^q_p(T)}.
\end{split}\end{equation}

Combining \eqref{Phi'} and {\bf(H2)}, we arrive at
\begin{align*}
\E \tilde{R}(T)\log \tilde{R}(T)=\E ^{\tilde{\Q}_T}\log \tilde{R}(T)=&\frac{1}{2}\E^{\tilde{\Q}_T} \int_0^T |(\si_u^\ast(\si_u\si^\ast_u)^{-1}\tilde{\Phi}(u)|^2\d u\\
&\leq\delta\left(\frac{|y|^2}{T}+4\kappa\|b\|^2_{L_p^{q}(T)}\right).
\end{align*}
It follows from H\"{o}lder inequality and \eqref{Kh} that
\begin{align*}
\left(\E \tilde{R}(T)^{\frac{p}{p-1}}\right)^2&\leq\E^{\tilde{\Q}_T}\exp\left\{\frac{p+1}{(p-1)^2}\int_0^T |(\si_u^\ast(\si_u\si^\ast_u)^{-1}\tilde{\Phi}(u)|^2\d u\right\}\\
&\leq \gamma\e ^{\frac{\delta(p+1)|y|^2}{T(p-1)^2}}
\end{align*}
with $\gamma$ depending on $\kappa,\delta, \|b\|^2_{L_p^{q}(T)}$. Thus, we finish the proof of (i).

(ii) If moreover \eqref{b-co} holds, then \eqref{b-co} and \eqref{APP'} yield that
\begin{equation}\begin{split}\label{Phi}
\E^{\tilde{\Q}_T}\int_0^T|\tilde{\Phi}(s)|^2\d s&\leq 2\E^{\tilde{\Q}_T}\int_0^T\left(\frac{|y|^2}{T^2}+\left|b_s(X_s^x)-b_s(\tilde{Y}_s)\right|^2\right)\d s\\
&= 2\frac{|y|^2}{T}+2\E^{\tilde{\Q}_T}\int_0^T\left(\left|b_s(X_s^x)-b_s(\tilde{Y}_s)\right|^2\right)\d s\\
&=2\frac{|y|^2}{T}+2\mathbb{E}^{\tilde{\Q}_T}\int_0^T \left|b_s(\tilde{Y}_s)-b_s\left(\tilde{Y}_s+\frac{s}{T}y\right)\right|^2\d s\\
&\leq2\frac{|y|^2}{T}+ 2\kappa\left\{\int_{0}^T\left(\int_{\mathbb{R}^d}\left|b_s(z+\frac{s}{T}y)-b_s(z)\right|^{p}\d z\right)^{\frac{q}{p}}\d s\right\}^{\frac{2}{q}}\\
&\leq2\frac{|y|^2}{T}+2\kappa\left\{\int_0^TK(s)^{2q}\d s\right\}^{\frac{1}{q}}|y|^2 \\
&\leq 2\left(\frac{1}{T}+\kappa\|K\|^2_{L^{q}([0,T])}\right)|y|^2.
\end{split}\end{equation}

Combining \eqref{Phi} and {\bf(H2)}, we arrive at
\begin{align*}
\E \tilde{R}(T)\log \tilde{R}(T)=\E ^{\tilde{\Q}_T}\log \tilde{R}(T)=&\frac{1}{2}\E^{\tilde{\Q}_T} \int_0^T |(\si_u^\ast(\si_u\si^\ast_u)^{-1}\tilde{\Phi}(u)|^2\d u\\
&\leq\delta\left(\frac{1}{T}+\kappa\|K\|^2_{L^{q}([0,T])}\right)|y|^2.
\end{align*}
It follows from H\"{o}lder inequality and \eqref{KR2} that
\begin{align*}
\left(\E \tilde{R}(T)^{\frac{p}{p-1}}\right)^2&\leq\E^{\tilde{\Q}_T}\exp\left\{\frac{p+1}{(p-1)^2}\int_0^T |(\si_u^\ast(\si_u\si^\ast_u)^{-1}\tilde{\Phi}(u)|^2\d u\right\}\\
&\leq \frac{1}{1-\frac{(2p+2)\beta(T,K,\delta,\kappa)|y|^2}{(p-1)^2}}
\end{align*}
if $|y|^2<\left(\frac{(2p+2)\beta(T,K,\delta,\kappa)}{(p-1)^2}\right)^{-1}$.
Thus, the proof is completed.
 \end{proof}
\begin{rem} In fact, from the construction of the coupling by change of measure, we only use the weak existence and uniqueness of \eqref{EH}.  Thus, we may replace \eqref{b-in} by some weaker integrable condition that ensures the weak existence and uniqueness of \eqref{EH}.
\end{rem}
\paragraph{Acknowledgement.} The author would like to thank Professor Feng-Yu Wang for corrections and helpful comments.

\beg{thebibliography}{99}

\bibitem{B} K. Bahlali,  \emph{Flows of homeomorphisms of stochastic differential equations with measurable drift,}  Stochastic Rep. 67(1999), 53-82.

%\bibitem{BBMY} J. H. Bao, B. B\"{o}ttcher, X. R. Mao, C. G. Yuan,  \emph{Convergence rate of numerical solutions to SFDEs with jumps,}  Journal of Computational and Applied Mathematics, 236(2011), 119-131.

%\bibitem{BHY} J. H. Bao, X. Huang, C. G. Yuan \emph{Convergence rate of  Euler-Maruyama scheme for SDEs with rough coefficients,} ArXiv:1609.06080, (2016).

%\bibitem{BWY} J. Bao, F.-Y. Wang, C. Yuan, \emph{Derivative formula and Harnack inequality for degenerate functionals SDEs,} Stoch. Dyn. 13(2013), 943-951.

%\bibitem{BWY13} J. Bao, F.-Y. Wang, C. Yuan,  \emph{Bismut formulae and applications for functional SPDEs,} Bull. Sci. Math. 137 (2013), 509-522.

\bibitem{BWY15} J. Bao, F.-Y. Wang, C. Yuan,  \emph{Hypercontractivity for Functional Stochastic Differential Equations,} Stoch. Proc. Appl. 125(2015), 3636-3656

\bibitem{GM} L. Gy\"{o}ngy, T. Martinez,  \emph{On stochastic differential equations with locally unbounded drift,}  Czechoslovak Math. J. 51(2001), 763--783.

\bibitem{H} Xing Huang,  \emph{Strong Solutions for Functional SDEs with Singular Drift,}  to appear in Stochastic and Dynamics.

\bibitem{HW} X. Huang, F.-Y. Wang, \emph{Functional SPDE with Multiplicative Noise and Dini Drift,}  Ann. Fac. Sci. Toulouse Math. 6(2017), 519-537.

\bibitem{HW2} X. Huang, F.-Y. Wang, \emph{Degenerate SDEs with Singular Drift and Applications to Heisenberg Groups,}  to appear in J. Differential Equations.

\bibitem{HW3} X. Huang, F.-Y. Wang, \emph{Distribution Dependent SDEs with Singular Coefficients,}  arXiv:1805.01682.

\bibitem{KR} N. V. Krylov, M. R\"{o}ckner, \emph{Strong solutions of stochastic equations with singular time dependent drift,}  Probab. Theory Related Fields 131(2005), 154-196.

\bibitem{LLW} H. Li, D. Luo, J. Wang \emph{Harnack inequalities for SDEs with multiplicative noise and non-regular drift,} Stoch. Dyn. 15(2015), 18pp.

\bibitem{P} E. Priola, \emph{Pathwise Uniqueness for Singular SDEs driven by Stable Processes,} Osaka Journal of Mathematics, 49(2012), 421-447.

\bibitem{Shao} J. Shao, \emph{Harnack inequalities and heat kernel estimates for SDEs with singular drifts,} Bull. Sci. Math. 137(2013), 589-601.

\bibitem{FYW0} F.-Y. Wang,  \emph{Logarithmic Sobolev inequalities on noncompact Riemannian manifolds,}  Probab. Theory Related Fields 109(1997), 417-424.

\bibitem{FYW} F.-Y. Wang,  \emph{Gradient estimate and applications for SDEs in Hilbert space with multiplicative noise and Dini continuous drift,}  J. Differential Equations, 260(2016), 2792-2829.

\bibitem{Wbook} F.-Y. Wang, \emph{Harnack Inequality and Applications for Stochastic Partial Differential Equations,} Springer, New York, 2013.

\bibitem{W2} F.-Y. Wang, \emph{Hypercontractivity and Applications for Stochastic Hamiltonian Systems,} arXiv:1409.1995.

\bibitem{WZ} F.-Y. Wang, X. C. Zhang,  \emph{Degenerate SDE with H\"{o}lder-Dini Drift and Non-Lipschitz Noise Coefficient,} SIAM J. Math. Anal. 48 (2016), 2189-2226.

\bibitem{WZ15} F.-Y. Wang, X. C. Zhang,  \emph{Degenerate SDEs in Hilbert Spaces with Rough Drifts,} arXiv:1501.0415.

\bibitem{Z} X. C. Zhang,  \emph{Strong solutions of SDEs with singural drift and Sobolev diffusion coefficients,}  Stoch. Proc. Appl. 115(2005), 1805-1818.

\bibitem{Z2} X. Zhang, \emph{Stochastic homeomorphism flows of SDEs with singular drifts and Sobolev diffusion coefficients,} Electron. J. Probab. 16(2011), 1096--1116.

\bibitem{ZV} A. K. Zvonkin,  \emph{A transformation of the phase space of a diffusion process that removes the drift,}  Math. Sb. (1)93 (1974).
\end{thebibliography}

\end{document}